\documentclass[12pt, twoside, leqno]{article}

\usepackage{amsmath,amsthm}
\usepackage{amssymb}

\usepackage{enumerate}

\usepackage{graphicx}

\pagestyle{myheadings}
\markboth{Akram  Heidari and S.Morteza Mirafzal}{Johnson graphs are panconnected}

\newtheorem{thm}{Theorem}[section]

\newtheorem{lem}[thm]{Lemma}

\theoremstyle{definition}
\newtheorem{defin}[thm]{Definition}

\newtheorem{exa}[thm]{Example}

\numberwithin{equation}{section}

\frenchspacing

\textwidth=14.5cm
\textheight=21cm
\parindent=13pt
\oddsidemargin=0.5cm
\evensidemargin=0.5cm
\topmargin=-0.1cm

\usepackage{pgf,tikz}
\usetikzlibrary{arrows}

\begin{document}

\baselineskip=13pt

%%%%%%%%%%%%%%%%

\title{Johnson graphs are panconnected}

\author{Akram  Heidari and S.Morteza Mirafzal*\\
Department of Mathematics \\
  Lorestan University, Khorramabad, Iran\\
\\
E-mail: mirafzal.m@lu.ac.ir,\\ 
E-mail:smortezamirafzal@yahoo.com\\E-mail:heidari\_math@yahoo.com}

\date{}

\maketitle

%% Classification and key words; note that the 2010 classification is used:05C25, 05C69, 94C15

\renewcommand{\thefootnote}{}

\footnote{2010 \emph{Mathematics Subject Classification}: Primary 05C70,   Secondary 05C40,  94C15}

\footnote{\emph{Keywords}: Johnson graph, square of a graph, panconnected graph.}

\footnote{\emph{*Corresponding author.}}

\renewcommand{\thefootnote}{\arabic{footnote}}
\setcounter{footnote}{0}

%%%%%%%%

\begin{abstract}
For any given $n,m \in \mathbb{N}$ with $ m < n $,    the Johnson graph $J(n,m)$ is defined as the graph whose vertex set is $V=\{v\mid v\subseteq  [n]=\{1,...,n\}, |v|=m\}$, where  two vertices $v$,$w$ are adjacent if and only if $|v\cap w|=m-1$.   A graph $G$ of order $n > 2$ is panconnected if for every two vertices $u$ and $v$, there is
a $u$-$v$ path of length $l$ for every integer $l$ with
$d(u,v) \leq l \leq n-1$.  In this paper,  we prove that the Johnson graph  $J(n,m)$ is a panconnected graph.

\end{abstract}
\maketitle
\section{ Introduction}
\noindent

Johnson graphs arise from the association schemes
of the same name. They are defined as follows.\

Given $n,m \in \mathbb{N}$ with $ m<n  $,
the Johnson graph $J(n,m)$ is defined by: \

(1) The vertex set is the set of all subsets of $[n] = \{ 1,2, ..., n\} $ with cardinality exactly $m$;\

(2) Two vertices are adjacent if and only if the symmetric difference of the corresponding
sets is two.\

The Johnson graph $J(n,m)$ is a vertex-transitive graph [7].
It follows from the definition that  for $m = 1$, the Johnson graph $J(n,1)$ is the complete graph
$K_n$. For $m = 2$ the Johnson graph $J(n,2)$ is the line graph of the complete graph on $n$ vertices,
also known as the triangular graph $T(n)$.  If $M$ is an $m$-subset of the set $[n]=\{1,...,n\}$, then the  complementation of subsets $ M \longmapsto  M^c$ induces an isomorphism $J(n,m) \cong    J(n, n-m
)$.  Hence,   in the sequel, we   assume
without loss of generality that  $m \leq \frac{n}{2}  $. \\
A graph $G$ of order $n>2$ is pancyclic if $G$ contains a cycle
of length $l$ for each integer $l$ with $3 \leq l \leq n $. A graph $G$ of order $n > 2$ is panconnected if for every two vertices $u$ and $v$, there is
a $u$-$v$ path of length $l$ for every integer $l$ with
$d(u,v) \leq l \leq n-1$. Note that if a graph $G$ is   panconnected, then $G$ is pancyclic. A graph $G$ of order $n > 2$ is Hamilton-connected if for
any pair of distinct vertices $u$ and $v$, there is a Hamilton $u$-$v$ path, namely, there is
a $u$-$v$ path of length $n-1$. It is clear that if $G$ is a panconnected graph then $G$ is a Hamilton-connected graph. If $n>2,$ then the graph $K_n,$ the complete graph on $n$ vertices,  is a  panconnected graph.  Hence if $m=1$ then the Johnson graph $J(n,m)$ is a   panconnected graph. Alspach [1] proved that the Johnson graph  $J(n,m)$ is a Hamilton-connected graph.  In this paper,  we show that for every  $n,m$  the Johnson graph  $J(n,m)$ is a   panconnected graph, which
generalizes the Alspach's   result.

\section{Preliminaries}
In this paper, a graph $G=(V,E)$ is
considered as a finite undirected simple graph where $V=V(G)$ is the vertex-set
and $E=E(G)$ is the edge-set. For all the terminology and notation
not defined here, we follow $[2,4,5,7]$.\

The
group of all permutations of a set $V$ is denoted by S$ym(V)$  or
just S$ym(n)$ when $ | V | =n $. A $permutation\  group$ $H$ on
$V$ is a subgroup of S$ym(V)$. In this case we say that $H$  $acts$
on $V$. If $G$ is a graph with vertex-set $V$, then we can view
each automorphism of $G$ as a permutation of $V$, and so $Aut(G)$ is a
permutation group. When the group $\Gamma$ acts   on $V$, we say that $\Gamma$ is
$transitive$ (or $\Gamma$ acts $transitively$  on $V$)   if there is just
one orbit. This means that given any two elements $u$ and $v$ of
$V$, there is an element $ \beta $ of  $\Gamma$ such that  $\beta (u)= v
$.

The graph $G$ is called $vertex$-$transitive $  if  $Aut(G)$
acts transitively on $V(G)$. The action of $Aut(G)$ on
$V(G)$ induces an action on $E(G)$, by the rule
$\beta\{x,y\}=\{\beta(x),\beta(y)\}$,  $\beta\in Aut(G)$, and
$G$ is called $edge$-$transitive$ if this action is
transitive.

The $square$ $graph$   $G^{2}$ of a graph $G$  is the graph with vertex set $V(G)$ in which two vertices are adjacent if and only if their distance in $G$ is at most two.\

A vertex cut of the graph $G$ is a subset $U$ of $V$ such that the subgraph
$G-U$ induced by $V-U$ is either trivial or not connected.  The $connectivity$
$\kappa(G)$ of a nontrivial connected graph $G$ is the minimum cardinality of all vertex
cuts   of $G$. If we denote by $\delta(G)$ the minimum degree of $G$, then $\kappa(G) \leq \delta(G)$. A graph $G$ is called $k$-$connected$ (for $k \in \mathbb{N}$) if $|V(G)| > k$ and $G-X$ is connected for every subset $X \subset V(G)  $ with $|X|< k$. It is trivial that if a  positive integer $m$ is such that $m \leq \kappa(G)$, then $G$ is an $m$-connected graph. In the sequel, we need the following facts.

\begin{thm}$[13]$ If a connected graph $G$ is edge-transitive, then $\kappa(G) = \delta(G)$, where $\delta(G)$ is the minimum degree of vertices of $G$.

\end{thm}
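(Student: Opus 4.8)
The plan is to rely on the bound $\kappa(G)\le\delta(G)$ noted above and to prove the reverse inequality $\kappa(G)\ge\delta(G)$, arguing by contradiction; so suppose $\kappa(G)<\delta(G)$. If $G$ were complete we would have $\kappa(G)=|V(G)|-1=\delta(G)$, so we may assume $G$ is not complete. Call a set $\emptyset\ne X\subsetneq V(G)$ a \emph{fragment} if its vertex boundary $N(X)$, that is, the set of vertices outside $X$ having a neighbour in $X$, is a minimum vertex cut and $V(G)\setminus(X\cup N(X))\ne\emptyset$; since $G$ is not complete, fragments exist. Fix an \emph{atom}, namely a fragment $A$ of least cardinality. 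First I would record three elementary facts: (i) $A$ induces a connected subgraph, because the boundary of any component of a disconnected fragment is contained in the boundary of the whole fragment, yielding a strictly smaller fragment; (ii) $|A|\ge 2$, since if $A=\{v\}$ then $N(\{v\})=N(v)$ would give $\delta(G)\le\deg(v)=\kappa(G)$, contrary to assumption; and (iii) consequently $A$ contains an edge $e_0$.

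Next I would bring in edge-transitivity. Any automorphism of $G$ maps minimum vertex cuts to minimum vertex cuts, hence atoms to atoms; so for an arbitrary edge $e$ of $G$, picking $\phi\in Aut(G)$ with $\phi(e_0)=e$ shows that $\phi(A)$ is an atom containing both endpoints of $e$. Thus \emph{every edge of $G$ lies inside some atom}. The step I expect to be the main obstacle is the classical lemma that \emph{distinct atoms are disjoint}. This is proved by submodularity of the map $X\mapsto|X\cup N(X)|$, which gives $|N(A\cap A')|+|N(A\cup A')|\le|N(A)|+|N(A')|=2\kappa(G)$ for atoms $A\ne A'$: if $A$ and $A'$ overlapped with neither containing the other, then a short case analysis on whether $A\cap A'$ and $A\cup A'$ are proper fragments would exhibit $A\cap A'$ as a fragment of cardinality smaller than an atom, a contradiction; and the containment case cannot occur because all atoms have the same cardinality.

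Finally I would combine the two ingredients. Since $G$ is connected with at least one edge, every vertex lies on an edge and hence in an atom, so the atoms partition $V(G)$; and as no edge joins two distinct atoms, connectedness of $G$ forces there to be a single atom, equal to $V(G)$. But an atom $A$ is a component of $G-N(A)$ with $G-N(A)$ disconnected, so $|A|\le|V(G)|-\kappa(G)-1<|V(G)|$, a contradiction. Therefore $\kappa(G)\ge\delta(G)$, and together with $\kappa(G)\le\delta(G)$ this gives $\kappa(G)=\delta(G)$. (This is the classical argument of Mader and Watkins; the same atom technique, pushed further, even shows that a connected edge-transitive graph which is not vertex-transitive must be bipartite, but that refinement is not needed here.)
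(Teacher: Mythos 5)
The paper does not actually prove this statement: it is imported as Theorem 2.1 with a citation to Watkins [13], so the only thing to measure your argument against is the cited literature. What you have written is precisely the classical Watkins--Mader ``atom'' proof, i.e.\ the canonical route. The architecture is right, and the elementary steps all check out: reducing to $\kappa(G)\ge\delta(G)$, the existence of fragments for non-complete $G$, connectedness of an atom, $|A|\ge 2$ when $\kappa(G)<\delta(G)$, the use of edge-transitivity to place every edge inside some atom, and the final contradiction $|A|\le |V(G)|-\kappa(G)-1<|V(G)|$.

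The genuine gap is exactly the step you flag as the main obstacle: that distinct atoms are disjoint. The submodularity inequality $|N(A\cap A')|+|N(A\cup A')|\le |N(A)|+|N(A')|=2\kappa(G)$ does not by itself show that $A\cap A'$ is a fragment. To extract $|N(A\cap A')|\le\kappa(G)$ from it you need $|N(A\cup A')|\ge\kappa(G)$, which holds only when $N(A\cup A')$ is itself a vertex cut, i.e.\ only when $V(G)\setminus\bigl(A\cup A'\cup N(A\cup A')\bigr)\ne\emptyset$. Ruling out the degenerate case $A\cup A'\cup N(A\cup A')=V(G)$ is where the real work of Mader's lemma lives; the standard treatment abandons the submodularity shortcut and instead uses the $3\times 3$ partition of $V(G)$ induced by $\{A,N(A),\overline{A}\}$ and $\{A',N(A'),\overline{A'}\}$, together with the minimality inequalities $|A|\le|A'|$ and $|A|\le|\overline{A'}|$ (valid because $A'$ and $\overline{A'}$ are fragments and $A$ is an atom), to bound $N(A\cap A')\subseteq (A\cap N(A'))\cup(N(A)\cap A')\cup(N(A)\cap N(A'))$ by $\kappa(G)$ directly. (The nonemptiness of the outside of $A\cap A'$ is not a problem: $N(A\cap A')\subseteq A\cup N(A)$, so $\overline{A}$ survives the deletion.) Calling this a ``short case analysis'' leaves the hardest step of the theorem unproved; once that lemma is supplied in full, the rest of your argument is complete and coincides with the proof in the source the paper cites.
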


\begin{thm}$[3]$
If  $G$ is a $2$-connected graph, then $G^2$ is Hamilton-connected.
\end{thm}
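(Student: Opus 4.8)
The plan is to deduce the result from Fleischner's theorem --- that the square of every $2$-connected graph is Hamiltonian --- by means of a small gadget. Fix two distinct vertices $u,v$ of $G$; since $G$ is $2$-connected we have $|V(G)|\ge 3$, and the task is to exhibit a $u$-$v$ path in $G^2$ through all of $V(G)$.

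First I would form an auxiliary graph $G'$ from $G$ by adjoining one new vertex $w$ together with the two edges $wu$ and $wv$. Then $G'$ is again $2$-connected: deleting $w$ returns $G$; deleting a vertex of $G$ other than $u,v$ leaves $G-x$ connected (as $G$ is $2$-connected) with $w$ still attached; and deleting $u$ or $v$ leaves the rest of $G$ connected with $w$ attached through the other of the two. Moreover, for $a,b\in V(G)$ one has $d_{G'}(a,b)\le 2$ precisely when $d_G(a,b)\le 2$ or $\{a,b\}=\{u,v\}$, the only new length-$2$ path through $w$ being $u$-$w$-$v$; hence $(G')^2-w$ equals $G^2$ together with at most the single extra edge $uv$.

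The crucial step is to apply Fleischner's theorem in its sharpened form: if $H$ is $2$-connected, $x\in V(H)$, and $e,f$ are two distinct edges of $H$ at $x$, then $H^2$ has a Hamiltonian cycle containing both $e$ and $f$. Taking $H=G'$, $x=w$, $e=wu$, $f=wv$ yields a Hamiltonian cycle $C$ of $(G')^2$ whose two edges at $w$ are exactly $wu$ and $wv$. Deleting $w$ from $C$ leaves a path $P$ from $u$ to $v$ spanning $V(G')\setminus\{w\}=V(G)$. Since $|V(G)|\ge 3$, the path $P$ has length at least $2$, so it uses no edge joining its two endpoints; thus even if the edge $uv$ belongs to $(G')^2-w$ it is not used by $P$, and all edges of $P$ lie in $G^2$. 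Therefore $P$ is a Hamiltonian $u$-$v$ path in $G^2$, and as $u,v$ were arbitrary, $G^2$ is Hamilton-connected.

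The genuine difficulty is the sharpened Fleischner statement used above: the bare ``Hamiltonian'' version gives no control over which two of the many neighbours of $w$ in $(G')^2$ the cycle meets, and without forcing the edges $wu$ and $wv$ the gadget collapses. Establishing the prescribed-edges version essentially reruns Fleischner's proof --- reducing to a more tractable $2$-connected graph, extracting a spanning connected even subgraph, and peeling it inductively into a Hamiltonian cycle of the square --- now carrying along the two forced edges at the distinguished vertex; an alternative is a direct induction on an ear decomposition of $G$. By contrast, checking that $G'$ is $2$-connected and that $P$ avoids the edge $uv$ is routine.
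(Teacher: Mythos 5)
The paper does not prove this statement at all: Theorem 2.2 is quoted as a black box from reference [3] (Chartrand, Hobbs, Jung, Kapoor and Nash-Williams, \emph{The square of a block is Hamiltonian-connected}), so there is no in-paper proof to compare yours against. What you have written is, in essence, the argument of that cited note itself: adjoin a new vertex $w$ of degree $2$ joined to $u$ and $v$, check that $G'$ is still $2$-connected, apply a strong form of Fleischner's theorem to force the Hamiltonian cycle of $(G')^2$ to pass through $w$ via the edges $wu$ and $wv$, and delete $w$. All of the routine verifications you carry out are correct: $G'$ is indeed $2$-connected; the only edge of $(G')^2$ on $V(G)$ that might not lie in $G^2$ is $uv$; and a $u$--$v$ path on at least three vertices cannot use the edge $uv$, so the path obtained from the cycle lives entirely in $G^2$. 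One simplification worth noting: because $w$ has degree exactly $2$ in $G'$, you do not need the full prescribed-two-edges version of Fleischner's theorem; the weaker (and standard) strengthening that the Hamiltonian cycle of $H^2$ can be chosen so that both of its edges at a given vertex $x$ belong to $E(H)$ already forces those edges to be $wu$ and $wv$.

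The one point to be clear-eyed about is the status of that strengthened Fleischner statement. It is a genuine theorem (due to Fleischner, with a short modern proof by Georgakopoulos), so your argument is a correct reduction to a citable result -- which puts it on exactly the same footing as the paper's own treatment, namely deferring the real work to the literature. But as a self-contained proof it is incomplete precisely where you say it is: the bare ``$H^2$ is Hamiltonian'' version gives no control over the two cycle edges at $w$, and without that control the gadget yields nothing. So the proposal should be read as an accurate reconstruction of how [3] derives Hamilton-connectedness from Fleischner's theorem, not as an independent proof of Theorem 2.2.
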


By Theorem 2.2 and [6] we can conclude the following fact.

\begin{thm} The square of a graph $G$ is panconnected whenever $G$ is a $2$-connected graph.
\end{thm}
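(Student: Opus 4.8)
The plan is to combine Theorem 2.2 with an \emph{interpolation} argument that produces $u$-$v$ paths of every admissible length by successively shortening a Hamilton $u$-$v$ path one vertex at a time. Fix a $2$-connected graph $G$ on $n > 2$ vertices, fix distinct vertices $u,v$, and write $d = d_{G^2}(u,v)$ for their distance in $G^2$; we must exhibit, for every integer $l$ with $d \le l \le n-1$, a $u$-$v$ path of length $l$ in $G^2$. The two extreme values are immediate: $l = d$ is realized by a geodesic of $G^2$, and $l = n-1$ is realized by a Hamilton $u$-$v$ path, which exists because $G^2$ is Hamilton-connected by Theorem 2.2. So the whole content lies in the intermediate range $d < l < n-1$, and I would obtain it by a descending induction on the length.

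Suppose we already have a $u$-$v$ path $P$ in $G^2$ of length $l$ with $d < l \le n-1$, and we want a $u$-$v$ path of length $l-1$. The idea is to locate an internal vertex $y$ of $P$, with predecessor $x$ and successor $z$ along $P$, such that $x$ and $z$ are adjacent in $G^2$; deleting $y$ then yields a $u$-$v$ path of length $l-1$ on the remaining vertices. Iterating this from the Hamilton path down to a geodesic produces $u$-$v$ paths of \emph{every} length between $d$ and $n-1$, which is exactly panconnectivity. This is the point at which reference $[6]$ is used: the structural properties of squares of $2$-connected graphs established there guarantee that whenever $P$ is not already a geodesic, either such a ``removable'' internal vertex exists outright, or a short Fleischner-style local rerouting of $P$ (exchanging a two-step detour for one through a suitable edge of $G$) creates one. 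The routine verifications are that each step lowers the length by exactly one while keeping the endpoints $u$ and $v$ fixed, that the induction cannot stall strictly above length $d$ (it stalls only at a path to which the shortening lemma fails to apply, i.e.\ a geodesic), and the triviality that $G^2$ has more than two vertices.

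The main obstacle is precisely the shortening lemma that $[6]$ supplies: that in the square of a $2$-connected graph every non-geodesic $u$-$v$ path admits a length-reducing local modification. This is a genuine structural fact about squares of blocks, of the same flavour as Theorem 2.2 but not a formal consequence of it, and its proof requires redoing the Fleischner-type vine/ear analysis for $u$-$v$ paths rather than for Hamilton cycles; it is this analysis that I would import from $[6]$ rather than rebuild. Once it is in hand, the assembly above is bookkeeping: start from the Hamilton path of Theorem 2.2, apply the lemma $n-1-l$ times to reach a path of length $l$, and do this for each $l$ in the range $d \le l \le n-1$.
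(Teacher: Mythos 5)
Your proposal rests on exactly the same two ingredients as the paper, which proves this theorem in one line: Theorem 2.2 gives that $G^2$ is Hamilton-connected, and Fleischner's result in $[6]$ --- that in squares of graphs Hamilton-connectedness and panconnectedness are equivalent concepts --- then yields panconnectedness immediately. Your descending-induction scaffolding is a plausible reconstruction of what happens inside $[6]$, but since the statement actually cited there is the equivalence theorem itself, the interpolation argument is not needed and the load-bearing step of your proof coincides with the paper's.
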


\section{Main results}
The Boolean lattice $BL_n, n \geq 1$  is the graph whose vertex set is the set of all subsets of $[n]= \{ 1,2,...,n \}$, where two subsets $x$ and $y$ are adjacent if their symmetric difference has precisely one element.  In the graph $BL_n$, the layer $L_m$ is the set of $m$-subsets of $[n]$.  We denote by $ B(n,m)$, the subgraph of $BL_n$ induced by layers $L_m$ and $ L_{m+1} $. Noting that   $ n \choose m$ =$ n \choose n-m$,  we can deduce  that  $ B(n,m)   \cong   B(n,n-m-1). $  Therefore, in the sequel we assume that $ m < \frac {n}{2}  $. Now, we have the following definition.

\begin{defin}
Let $ n \geq 3 $ be an integer and $ [n] = \{1,2,..., n \} $. Let $ m$ be an integer such that
$1\leq m<\frac{n}{2}$. The graph $ B(n,m)$ is a
 graph with the vertex set  $V=\{v \  |  \ v \subset [n] ,  | v |  \in \{ m,m+1  \} \} $ and the
edge set $ E= \{ \{ v , w \} \  | \  v , w \in V , v \subset w $ or $ w \subset v \} $.
\end{defin}
It is easy to see that $B(3,1)$ is $C_6$, the cycle of order 6.
\begin{exa}
According to the Definition 3.1.  Figure 1. shows  $ B(5,1)$    in the  plane.\
\end{exa}\

\definecolor{qqqqff}{rgb}{0.,0.,1.}
\begin{tikzpicture}[line cap=round,line join=round,>=triangle 45,x=.75cm,y=.9cm]
\clip(-4.46,-0.04) rectangle (8.9,4.78);
\draw (-2.,4.)-- (-4.32,1.98);
\draw (-2.,4.)-- (-2.7,1.98);
\draw (-2.,4.)-- (-1.22,2.);
\draw (-2.,4.)-- (0.24,2.);
\draw (0.,4.)-- (-4.32,1.98);
\draw (0.,4.)-- (1.56,2.);
\draw (0.,4.)-- (3.18,2.02);
\draw (0.,4.)-- (4.56,1.96);
\draw (2.,4.)-- (-2.7,1.98);
\draw (2.,4.)-- (1.56,2.);
\draw (2.,4.)-- (6.,2.);
\draw (2.,4.)-- (7.38,1.98);
\draw (4.,4.)-- (-1.22,2.);
\draw (4.,4.)-- (3.18,2.02);
\draw (4.,4.)-- (6.,2.);
\draw (4.,4.)-- (8.54,1.98);
\draw (6.,4.)-- (0.24,2.);
\draw (6.,4.)-- (4.56,1.96);
\draw (6.,4.)-- (7.38,1.98);
\draw (6.,4.)-- (8.54,1.98);
\draw (1.1,0.95) node[anchor=north west] {Figure 1:  B(5,1)};
\begin{scriptsize}
\draw [fill=qqqqff] (-2.,4.) circle (1.5pt);
\draw[color=qqqqff] (-1.84,4.48) node {$ 1 $};
\draw [fill=qqqqff] (0.,4.) circle (1.5pt);
\draw[color=qqqqff] (0.18,4.4) node {$2$};
\draw [fill=qqqqff] (2.,4.) circle (1.5pt);
\draw[color=qqqqff] (2.14,4.4) node {$3$};
\draw [fill=qqqqff] (4.,4.) circle (1.5pt);
\draw[color=qqqqff] (4.18,4.42) node {$4$};
\draw [fill=qqqqff] (6.,4.) circle (1.5pt);
\draw[color=qqqqff] (6.2,4.42) node {$5$};
\draw [fill=qqqqff] (-4.32,1.98) circle (1.5pt);
\draw[color=qqqqff] (-4.34,1.56) node {$12$};
\draw [fill=qqqqff] (-2.7,1.98) circle (1.5pt);
\draw[color=qqqqff] (-2.7,1.64) node {$13$};
\draw [fill=qqqqff] (-1.22,2.) circle (1.5pt);
\draw[color=qqqqff] (-1.28,1.66) node {$14$};
\draw [fill=qqqqff] (0.24,2.) circle (1.5pt);
\draw[color=qqqqff] (0.22,1.68) node {$15$};
\draw [fill=qqqqff] (1.56,2.) circle (1.5pt);
\draw[color=qqqqff] (1.56,1.66) node {$23$};
\draw [fill=qqqqff] (3.18,2.02) circle (1.5pt);
\draw[color=qqqqff] (3.18,1.72) node {$24$};
\draw [fill=qqqqff] (4.56,1.96) circle (1.5pt);
\draw[color=qqqqff] (4.58,1.66) node {$25$};
\draw [fill=qqqqff] (6.,2.) circle (1.5pt);
\draw[color=qqqqff] (5.96,1.66) node {$34$};
\draw [fill=qqqqff] (7.38,1.98) circle (1.5pt);
\draw[color=qqqqff] (7.32,1.64) node {$35$};
\draw [fill=qqqqff] (8.54,1.98) circle (1.5pt);
\draw[color=qqqqff] (8.54,1.66) node {$45$};
\end{scriptsize}
\end{tikzpicture} \
\newline
Note that in Figure 1. $i= \{  i\}, ij= \{ i,j \}$. 
\
\

By  Definition 3.1. it  is clear that  if $v$ is a vertex of $B(n,m)$ of cardinality $m$ (as a subset of [n]), then $deg(v)=n-m$ and if the cardinality  of $v$ is $m+1$, then $deg(v)=m+1$. Now, it is obvious that the graph $ B(n,m) $ is a regular graph if and only if $n=2m+1$. We know that  every vertex-transitive graph is a  regular graph, thus, if $ n\neq 2m+1$, then the graph $B(n,m)$ is not a vertex-transitive graph. Since $m < \frac{n}{2},$ then $\delta(G)=m+1.$
It is clear that the graph $G=B(n,m)$ is a bipartite graph, with $V(G)= P_1 \cup P_2$, where
$$
P_{1}=\{v\ |\  v\subset [n],\; |v|=m+1\},    P_{2}=\{v\ |\  v\subset [n], |v|=m \}. \ \ \ \ \ \
$$
It follows from {}M\"{u}tze [12] that the graph $B(2m+1,m),$  is a Hamiltonian graph [12]. The graph $B(n,m),$ which is defined in [10] for every $n,m$   has some interesting properties [8,9,10,11].  In the sequel, we need the following facts concerning this class of  graphs.
\begin{lem}
The graph $ B(n,m) $ is a connected graph.

\end{lem}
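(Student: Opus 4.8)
\textit{Proof proposal.}

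The plan is to exploit the bipartition $V(B(n,m)) = P_1 \cup P_2$ given just above the lemma, and to reduce the connectivity of $B(n,m)$ to a statement about the layer $L_m$ of $m$-subsets alone. First I would observe that every vertex of $P_1$, that is, every $(m+1)$-subset $w$ of $[n]$, is adjacent in $B(n,m)$ to each of its $m$-element subsets: deleting any one element of $w$ produces an $m$-subset $v$ with $v\subset w$, hence $\{v,w\}\in E(B(n,m))$. Since $m\geq 1$, such a $v$ exists, so every vertex of $P_1$ has a neighbour in $P_2$. Consequently it suffices to prove that any two vertices of $P_2$ lie in the same component of $B(n,m)$.

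Second, I would show that any two $m$-subsets $A,B\subseteq[n]$ are joined by a path in $B(n,m)$, arguing by induction on $k=|A\setminus B|=m-|A\cap B|$. If $k=0$ then $A=B$ and there is nothing to prove. If $k\geq 1$, choose $a\in A\setminus B$ and $b\in B\setminus A$; both sets are nonempty because $|A|=|B|=m$, and $a\neq b$. Put $A'=(A\setminus\{a\})\cup\{b\}$, so $|A'|=m$, and note that $A\cup\{b\}=A'\cup\{a\}$ is an $(m+1)$-subset of $[n]$ containing both $A$ and $A'$. Hence $A-(A\cup\{b\})-A'$ is a path of length $2$ in $B(n,m)$. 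Since $|A'\setminus B|=k-1$, the inductive hypothesis gives a path from $A'$ to $B$, and prepending the length-$2$ path yields one from $A$ to $B$. (In effect, each step of this induction simulates a single edge of the Johnson graph $J(n,m)$ by a detour of length $2$ that goes up to the layer $L_{m+1}$ and back down.)

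Combining the two parts: every vertex of $B(n,m)$ is connected by a path to some $m$-subset, and all $m$-subsets lie in a single component; therefore $B(n,m)$ is connected.

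I do not anticipate a real obstacle here. The only point requiring care is the bookkeeping in the inductive step, namely verifying that $a\neq b$, that $A\cup\{b\}$ genuinely has cardinality $m+1$, and that both $A$ and $A'$ are $m$-subsets of $A\cup\{b\}$; all of these are immediate from $a\in A$ and $b\notin A$. The standing hypotheses $n\geq 3$ and $1\leq m<\tfrac{n}{2}$ ensure $m+1\leq n$ and $|P_1|,|P_2|\geq 2$, so no degenerate cases arise.
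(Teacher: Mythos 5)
Your proof is correct and complete. Note that the paper itself offers no argument for this lemma: the stated ``proof'' is only the remark that it is straightforward, with a pointer to reference [10]. Your two-step argument --- first observing that every $(m+1)$-set has a neighbour among the $m$-sets, then connecting any two $m$-sets $A,B$ by induction on $|A\setminus B|$ via the length-$2$ detour $A$, $A\cup\{b\}$, $(A\setminus\{a\})\cup\{b\}$ --- is exactly the kind of routine verification the authors are alluding to, and all the bookkeeping you flag ($a\neq b$, $|A\cup\{b\}|=m+1$, $|A'\setminus B|=k-1$) checks out. So you have supplied a self-contained elementary proof where the paper supplies only a citation; there is no gap.
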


\begin{proof}
The proof is straightforward (see [10]).
\end{proof}

\begin{lem} If $ G =B(n,m) $,
  then $ G $ is edge-transitive. Moreover,  if $n=2m+1$, then $G $ is vertex-transitive.

\end{lem}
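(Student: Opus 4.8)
The plan is to exhibit a large group of automorphisms of $G=B(n,m)$ and show it acts transitively on the edge set. The natural candidate is the symmetric group $\mathrm{Sym}(n)$ acting on $[n]$: each $\sigma\in\mathrm{Sym}([n])$ induces a map $\hat\sigma$ on subsets of $[n]$ by $\hat\sigma(v)=\{\sigma(i):i\in v\}$, and this preserves cardinality and the containment relation, hence sends edges of $G$ to edges of $G$. So $\hat\sigma\in\mathrm{Aut}(G)$, giving an embedding $\mathrm{Sym}(n)\hookrightarrow\mathrm{Aut}(G)$. First I would verify that an arbitrary edge of $G$ has the form $\{A,B\}$ with $A\subset B$, $|A|=m$, $|B|=m+1$; write $B=A\cup\{x\}$ with $x\notin A$. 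Given two such edges $\{A,A\cup\{x\}\}$ and $\{A',A'\cup\{x'\}\}$, I want a permutation $\sigma$ of $[n]$ with $\hat\sigma(A)=A'$ and $\sigma(x)=x'$. Since $|A|=|A'|=m$ and $x\notin A$, $x'\notin A'$, the sets $A\cup\{x\}$ and $A'\cup\{x'\}$ both have size $m+1$, so there is a bijection $[n]\setminus(A\cup\{x\})\to[n]\setminus(A'\cup\{x'\})$; combining any bijection $A\to A'$ with $x\mapsto x'$ and this last bijection yields the desired $\sigma\in\mathrm{Sym}(n)$. Then $\hat\sigma$ carries the first edge to the second, proving edge-transitivity.

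For the second claim, suppose $n=2m+1$. As noted in the excerpt just before the lemma, $G=B(n,m)$ is then regular (every vertex of size $m$ has degree $n-m=m+1$, and every vertex of size $m+1$ has degree $m+1$). Vertex-transitivity will not follow from the $\mathrm{Sym}(n)$-action alone, since $\hat\sigma$ preserves cardinality and therefore cannot send a size-$m$ vertex to a size-$(m+1)$ vertex; $\mathrm{Sym}(n)$ has exactly the two orbits $P_1$ and $P_2$. So the key step is to produce a single automorphism of $G$ interchanging the two layers. The natural map is complementation $c:v\mapsto [n]\setminus v$: if $|v|=m$ then $|c(v)|=n-m=m+1$, and if $|v|=m+1$ then $|c(v)|=n-m-1=m$, so $c$ swaps $P_1$ and $P_2$. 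Moreover $c$ reverses inclusion — $v\subset w$ iff $c(w)\subset c(v)$ — so it preserves the edge relation of $G$; hence $c\in\mathrm{Aut}(G)$. Composing $c$ with the transitive action of $\mathrm{Sym}(n)$ on each of $P_1$ and $P_2$ separately, we get that $\mathrm{Aut}(G)$ is transitive on all of $V(G)=P_1\cup P_2$: given $u,v\in V(G)$, if they lie in the same layer use some $\hat\sigma$, and if they lie in different layers first apply $c$ and then an appropriate $\hat\sigma$.

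I expect the routine part to be checking that $\hat\sigma$ and $c$ are genuine graph automorphisms (i.e., that they preserve adjacency in both directions), which is immediate from the set-theoretic description of the edges. The only real subtlety — and the single place the hypothesis $n=2m+1$ is used — is recognizing that one needs an inclusion-reversing, layer-swapping map to upgrade edge-transitivity to vertex-transitivity, and that complementation does exactly this only because $n-m = m+1$ makes the image layers match up. I would present the argument in that order: define the $\mathrm{Sym}(n)$-action, prove edge-transitivity, then introduce $c$ and conclude vertex-transitivity in the case $n=2m+1$.
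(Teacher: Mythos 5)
Your argument is correct and complete. The paper gives no proof of this lemma at all---it simply writes ``See [10]''---so there is nothing internal to compare against; the route you take (the induced action of $\mathrm{Sym}(n)$ on subsets of $[n]$ to get edge-transitivity, plus the inclusion-reversing complementation map $v\mapsto [n]\setminus v$ to swap the two layers when $n=2m+1$) is the standard one and is exactly the kind of argument the cited reference supplies. Both halves check out: every edge of $B(n,m)$ has the form $\{A,\,A\cup\{x\}\}$ with $|A|=m$ and $x\notin A$, your piecewise-assembled permutation carries any such edge to any other, and complementation is a genuine automorphism interchanging $P_1$ and $P_2$ precisely because $n-m=m+1$; you also correctly identify that $\mathrm{Sym}(n)$ alone cannot give vertex-transitivity since it preserves the two layers.
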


\begin{proof}
See [10].
\end{proof}
Note that the number of vertices of the graph $ B(n,m)$ is  $ { n \choose m } + { n \choose m+1 }= { n+1 \choose m+1 }$,   the order of the Johnson graph $J(n+1,m+1)$.
Let $G$ be the graph $B(n,m)$. We assert that $G^2$, the square of the graph $G$,  is in fact  the Johnson graph $J(n+1,m+1)$.

\begin{thm}
If $G$ is  the graph $B(n,m)$, then $G^2 \cong J(n+1,m+1).$

\end{thm}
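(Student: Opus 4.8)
The plan is to exhibit an explicit bijection $\varphi$ between $V(B(n,m))$ and $V(J(n+1,m+1))$ and then check that it is a graph isomorphism from $G^2$ onto $J(n+1,m+1)$, where $G = B(n,m)$. The vertex counts already match by the Pascal identity ${n \choose m} + {n \choose m+1} = {n+1 \choose m+1}$ noted just before the statement, so the task is to make the correspondence concrete and adjacency-preserving. The natural choice: think of $[n+1] = [n] \cup \{n+1\}$, and send an $m$-subset $v \subseteq [n]$ (a vertex in the layer $P_2$) to $v \cup \{n+1\} \in V(J(n+1,m+1))$, and send an $(m+1)$-subset $w \subseteq [n]$ (a vertex in $P_1$) to $w$ itself. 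Every $(m+1)$-subset of $[n+1]$ either contains $n+1$ — and then it is $v \cup \{n+1\}$ for a unique $m$-subset $v \subseteq [n]$ — or it does not, in which case it is an $(m+1)$-subset of $[n]$; hence $\varphi$ is a bijection.

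Next I would verify the edge condition in both directions. Two vertices of $J(n+1,m+1)$ are adjacent iff their symmetric difference has size two, equivalently iff they meet in exactly $m$ elements. Two vertices of $G^2$ are adjacent iff their distance in $G = B(n,m)$ is $1$ or $2$. I would split into three cases according to which layers the two $B(n,m)$-vertices lie in. (i) Both in $P_1$ (both $(m+1)$-subsets of $[n]$): their $G$-distance is $2$ exactly when they share a common $m$-subset neighbour, i.e. when $|w_1 \cap w_2| = m$; under $\varphi$ these map to themselves, and indeed $|w_1 \cap w_2| = m$ is precisely the $J$-adjacency condition. (ii) Both in $P_2$ (both $m$-subsets $v_1, v_2 \subseteq [n]$): distance $2$ iff they have a common $(m+1)$-superset, i.e. $|v_1 \cap v_2| = m-1$; then $|\varphi(v_1) \cap \varphi(v_2)| = |(v_1 \cup \{n+1\}) \cap (v_2 \cup \{n+1\})| = |v_1 \cap v_2| + 1 = m$, again matching. (iii) One in each layer, say $v \in P_2$, $w \in P_1$: their $G$-distance is $1$ iff $v \subset w$; I need to check this is equivalent to $|\varphi(v) \cap \varphi(w)| = m$, i.e. $|(v \cup \{n+1\}) \cap w| = m$. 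Since $n+1 \notin w$, this is $|v \cap w| = m$, which (as $|v| = m$) says $v \subseteq w$ — so again the conditions coincide. Note the distance between a $P_1$ vertex and a $P_2$ vertex in the bipartite graph $G$ is always odd, so it is never exactly $2$; this is why case (iii) only involves distance $1$, and it is reassuring that $\varphi$ correctly refuses to create extra $J$-edges here.

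The main obstacle — really the only place needing care — is being sure the case analysis is exhaustive and that "distance $\le 2$ in $B(n,m)$" has been correctly translated into the combinatorial condition in each case, in particular checking that no pair of $B(n,m)$-vertices at distance $\ge 3$ gets mapped to a $J$-edge and, conversely, that every $J$-edge arises. Concretely, one should confirm: within $P_1$, two distinct $(m+1)$-subsets of $[n]$ with $|w_1 \cap w_2| = m$ indeed have the common neighbour $w_1 \cap w_2$ in $B(n,m)$; within $P_2$, two $m$-subsets with $|v_1 \cap v_2| = m - 1$ have common neighbour $v_1 \cup v_2$; and if their intersection is smaller, their $B(n,m)$-distance exceeds $2$ (both $v_i \cup \{n+1\}$ then share at most $m-1 < m$ elements, consistent with non-adjacency in $J$). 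Once these routine verifications are in place, $\varphi$ is a bijection preserving and reflecting adjacency, hence $G^2 \cong J(n+1,m+1)$, completing the proof.
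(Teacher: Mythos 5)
Your proposal is correct and follows essentially the same route as the paper: the identical bijection (fixing $(m+1)$-subsets and sending each $m$-subset $v$ to $v\cup\{n+1\}$), together with the same layer-by-layer analysis of when two vertices of $B(n,m)$ are at distance at most $2$. In fact you carry out the adjacency verification in more detail than the paper, which leaves that final check as ``an easy task.''
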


\begin{proof}
We know that   the vertex-set of  $G^2$ is the  vertex set of $G$ in which two vertices are adjacent if and only if their distance in $G$ is at most two.  Noting that the graph $G=B(n,m)$ is a bipartite graph, with $V(G)= P_1 \cup P_2$, where
$P_{1}=\{v\ |\  v\subset [n],\; |v|=m+1\},    P_{2}=\{v\ |\  v\subset [n], |v|=m\},$  we deduce that if $v$ and $w$ are vertices of $G$ such that $d(v,w)=2$, then $v,w \in P_1$ or $v,w \in P_2$.\\ If $d(v,w)=2$, and  $ v,w \in P_1$, then there is a vertex $u \in P_2$ such that $P: v,u,w  $ is a 2-path in $G$. In other words, $u$ is an $m$-subset of $[n]$ such that $u \subset v$ and $ u \subset w$. Then we must have $|v \cap w|=m. $\\ On the other hand, if $d(v,w)=2$, and  $ v,w \in P_2$, then there is a vertex $u \in P_1$ such that $P: v,u,w  $ is a 2-path in $G$. In other words, $u$ is a $(m+1)$-subset of $[n]$ such that $v \subset u$ and $w \subset u$. Then we must have $|v \cap w|= m-1.$ \

We now consider the Johnson graph $J(n+1,m+1)$, with the vertex set $W=\{ w\  | \
w\subset [n+1]=\{ 1,2,...,n,n+1 \}, |w| = m+1 \}$. Let $W_1= \{ w \ |\  w\in W, n+1 \notin w \}$  and $W_2= \{ w \ | \  w\in W, n+1 \in w \}$.  Note that if $w \in W_2, $ then we have $w=u\cup \{ n+1\}, $ for some $u \subset [n]$ such that $|u|=m$.   We now define the mapping $f: V(G^2) \rightarrow V(J(n+1,m+1))$ by this rule;\\

$$ f(v) =  \begin{cases}
v \ \ \ \ \ \ \ \ \ \ \ \ \ \ \  if \ |v|=m+1 \\  v\cup \{ n+1 \}\      if\ \  |v|=m\\
 \end{cases} $$ \
\ \newline
It is an easy task to show that $f$ is a graph isomorphism.

\end{proof}

\begin{thm}
Let $n,m \in \mathbb{N}$ with $ n\geq 3, \ m \leq \frac{n}{2}$. Then,  the Johnson graph $J(n,m)$ is  a panconnected graph.

\end{thm}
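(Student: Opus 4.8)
The plan is to reduce the statement to two structural facts already in hand: Theorem~2.3, that the square of any $2$-connected graph is panconnected, and Theorem~3.5, that $B(n-1,m-1)^{2}\cong J(n,m)$. Granting these, it suffices to prove that the auxiliary graph $B(n-1,m-1)$ is $2$-connected, and then to read the conclusion off.

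First I would dispose of the degenerate case $m=1$, where $J(n,1)=K_{n}$, which is panconnected for $n\geq 3$ by the remark in the Introduction; this case must be handled separately because the reduction below produces the parameter $m-1$, which Definition~3.1 requires to be at least $1$. So assume $m\geq 2$, whence $n\geq 2m\geq 4$. Put $n'=n-1$ and $m'=m-1$. I would then verify that $(n',m')$ satisfies the hypotheses of Definition~3.1: indeed $m'\geq 1$, and from $m\leq n/2$ one gets $m'=m-1\leq \tfrac{n}{2}-1<\tfrac{n-1}{2}=\tfrac{n'}{2}$, so $1\leq m'<n'/2$ and $n'\geq 3$. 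Hence $G:=B(n',m')$ is a legitimate graph, and by Theorem~3.5, $G^{2}\cong J(n'+1,m'+1)=J(n,m)$.

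Next I would show that $G=B(n-1,m-1)$ is $2$-connected. By Lemma~3.3 it is connected, and by Lemma~3.4 it is edge-transitive, so Theorem~2.1 applies and gives $\kappa(G)=\delta(G)$. Since $m'<n'/2$, the degree count recorded just before Lemma~3.3 gives $\delta(G)=m'+1=m\geq 2$, hence $\kappa(G)\geq 2$; moreover $|V(G)|=\binom{n-1}{m-1}+\binom{n-1}{m}=\binom{n}{m}>2$, so $G$ is $2$-connected in the sense of the definition. Theorem~2.3 then shows that $G^{2}$ is panconnected, and transporting this across the isomorphism $G^{2}\cong J(n,m)$ of Theorem~3.5 completes the proof.

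The argument is short once the preliminaries are in place: essentially all of the real content sits inside Theorem~2.3 and Theorem~3.5, both of which we are entitled to invoke. The only places in our own part that need care are the separate treatment of $m=1$ and the bookkeeping confirming that $B(n-1,m-1)$ literally meets the ``$2$-connected'' definition --- that is, not merely $\kappa\geq 2$ but also that it has more than two vertices --- so the ``obstacle'', such as it is, amounts to nothing beyond these boundary checks.
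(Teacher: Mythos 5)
Your proof is correct and follows essentially the same route as the paper's: dispose of $m=1$ via $J(n,1)=K_n$, then show $B(n-1,m-1)$ is $2$-connected using Lemmas~3.3--3.4 and Theorem~2.1, and conclude via Theorems~2.3 and~3.5. The extra boundary checks you include (that $(n-1,m-1)$ meets Definition~3.1 and that $|V(G)|>2$) are sound and slightly more careful than the paper's own write-up.
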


\begin{proof} Note that if $m=1,$
then $J(n,m) = K_n$, the complete graph on $n$ vertices, which is a   panconnected graph. Now  let $m\geq2$. Hence $m-1 \geq 1$.   Let $G=B(n-1,m-1)$ be the graph which is defined in Definition 3.1. Now,  by Lemma 3.3. and Lemma 3.4. $G$ is a connected edge-transitive graph. Hence,  by Theorem 2.1. $\kappa(G)=\delta(G)$. Since $\delta(G)=(m-1)+1=m$, then $G$ is a 2-connected graph.   Therefore,  by Theorem 2.3.  $G^2$  is  a panconnected graph. Now,  since by Theorem 3.5. $G^2 \cong J(n,m)$,  hence the Johnson graph $J(n,m)$ is a panconnected graph. 
\end{proof}

\end{document}